\newtheorem{thm}{Theorem}[section]
\newtheorem{lemma}[thm]{Lemma}
\newtheorem{cor}[thm]{Corollary}
\newtheorem{de}{Definition}[section]
\newenvironment{proof} {\par \noindent \textbf{Proof: }}{\QED \par \bigskip \par}
\newcommand{\QED}{\hfill$\square$}
\newcommand{\rz}{\vspace{0.2cm}}
\begin{document}
\baselineskip 16pt

\phantom{.} \vskip 5cm

\begin{center}
{\LARGE \bf Eccentric Connectivity Index  \\[12pt]
            of Chemical Trees} \\
\bigskip
\bigskip

{\large \sc Aleksandar Ili\'c\ \footnotemark[3] }

\smallskip
{\em Faculty of Sciences and Mathematics, Vi\v{s}egradska 33, 18 000 Ni\v{s}, Serbia} \\
e-mail: {\tt aleksandari@gmail.com}

\bigskip
{\large \sc Ivan Gutman }

\smallskip
{\em Faculty of Science, University of Kragujevac, P.O. Box 60, 34000 Kragujevac, Serbia} \\
e-mail: {\tt gutman@kg.ac.rs}

\bigskip\medskip
{\small (Received May 25, 2009)}
\bigskip

\end{center}


\begin{abstract}
The eccentric connectivity index $\xi^c$ is a distance--based molecular
structure descriptor that was recently used for mathematical modeling of
biological activities of diverse nature. We prove that the broom has maximum
$\xi^c$ among trees with a fixed maximum vertex degree, and characterize such
trees with minimum $\xi^c$\,. In addition, we propose a simple linear
algorithm for calculating $\xi^c$ of trees.
\end{abstract}

\footnotetext[3] {Corresponding author.}

\baselineskip=0.30in

\section{Introduction}

Let $G$ be a simple connected graph with $n = |V|$ vertices.
For a vertex $v \in V (G)$\,, $deg (v)$ denotes the degree of $v$\,.
For vertices $v, u \in V$\,, the distance $d (v, u)$ is defined as
the length of a shortest path between $v$ and $u$ in $G$\,.
The eccentricity $\varepsilon (v)$ of a vertex $v$ is the maximum
distance from $v$ to any other vertex. \rz

Sharma, Goswami and Madan \cite{ShGoMa97} introduced a distance--based molecular
structure descriptor, which they named ``{\it eccentric connectivity index\/}''
and which they defined as
$$
\xi^c = \xi^c (G) = \sum_{v \in V (G)} deg (v) \cdot \varepsilon (v) \ .
$$

The index $\xi^c$ was successfully used for mathematical modeling of
biological activities of diverse nature \cite{DuGuMa08,GuSiMa02,KuSaMa04,SaMa00,SaMa03}.
Some mathematical properties of $\xi^c$ were recently reported in \cite{ZhDu09}.

Chemical trees (trees with maximum vertex degree at most four) provide the graph
representation of alkanes \cite{GuPo86}. It is therefore a natural problem to
study trees with bounded maximum degree.

Denote by $\Delta = \Delta(T)$ the maximum vertex degree of a tree $T$\,. The
path $P_n$ is the unique $n$-vertex tree with $\Delta = 2$\,, while the star $S_n$ is
the unique tree with $\Delta = n-1$\,. Therefore, we can assume that
$3 \leq \Delta \leq n - 2$\,.

For an arbitrary tree $T$ on $n$ vertices \cite{ZhDu09},
$$
\left \lfloor \frac{3 (n - 1)^2 + 1}{2} \right \rfloor = \xi^c (P_n)
\geq \xi^c (T) \geq \xi^c (S_n) = 3 (n - 1) \ .
$$

\vspace{5mm}

\section{Chemical trees with maximum eccentric connectivity index}

The broom $B_{n, \Delta}$ is a tree consisting of a star $S_{\Delta
+ 1}$ and a path of length $n - \Delta - 1$ attached to a
pendent vertex of the star. It is proven
in \cite{LiGu07} that among trees with maximum vertex degree equal to
$\Delta$\,, the broom $B_{n, \Delta}$ uniquely minimizes the largest eigenvalue of the
adjacency matrix. Further, within the same class of trees, the broom has minimum
Wiener index and Laplacian-energy like invariant \cite{St09}. In \cite{YaYe05} and
\cite{YuLv06} it was demonstrated that the broom has minimum energy among trees with,
respectively, fixed diameter and fixed number of pendent vertices.
\vspace{0.2cm}

\begin{figure}[ht]
  \center
  \includegraphics [width = 9cm]{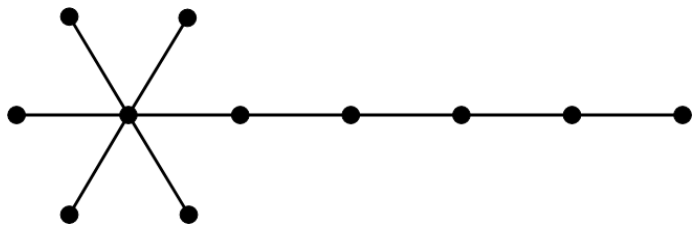}
  \caption { \textit{ The broom $B_{11, 6}$\,. } }
\end{figure}

The $\Delta$-starlike tree $T(n_1,n_2,\ldots,n_\Delta)$ is a
tree composed of the root $v$\,, and the paths $P_{n_1}$\,,
$P_{n_2}$\,, \ldots, $P_{n_\Delta}$\,, attached to $v$\,.
The number of vertices of $T(n_1,n_2,\ldots,n_{\Delta})$ is
thus equal to $n_1 + n_2 + \cdots + n_{\Delta} +
1$\,. Notice that the broom $B_{n, \Delta}$ is
a $\Delta$-starlike tree, $B_{n, \Delta} \cong T(n-\Delta,1,1,\ldots,1)$\,.

\begin{thm}
\label{thm-pi} Let $w$ be a vertex of a nontrivial connected graph
$G$\,. For nonnegative integers $p$ and $q$\,, let $G (p, q)$ denote
the graph obtained from $G$ by attaching to the vertex $w$ pendent paths
$P = w v_1 v_2 \ldots v_p$ and $Q = w u_1 u_2 \dots u_q$ of lengths $p$
and~$q$\,, respectively. If $p \geq q \geq 1$\,, then
$$
\label{eq-pi} \xi^c (G (p, q)) < \xi^c (G (p + 1, q - 1)) \ .
$$
\end{thm}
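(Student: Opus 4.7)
The plan is to compute $\xi^c(G(p+1,q-1)) - \xi^c(G(p,q))$ directly via the edge-sum identity
$$
\xi^c(H) \;=\; \sum_{v \in V(H)} \deg(v)\,\varepsilon(v) \;=\; \sum_{xy \in E(H)} \bigl(\varepsilon(x)+\varepsilon(y)\bigr),
$$
obtained by counting each $\deg(v)\,\varepsilon(v)$ once per incident edge. Set $A := G(p,q)$, $A' := G(p+1,q-1)$, $D := \varepsilon_G(w)$, and $n_G := |V(G)|$. The edge sets of $A$ and $A'$ coincide except that $u_{q-1}u_q$ (with the convention $u_0 := w$ when $q=1$) lies only in $A$, and $v_p v_{p+1}$ lies only in $A'$. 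Using the natural bijection $\phi:V(A)\to V(A')$ that sends $u_q$ to $v_{p+1}$ and is the identity elsewhere, and writing $\Delta(v) := \varepsilon_{A'}(\phi(v)) - \varepsilon_A(v)$, I would establish
$$
\xi^c(A') - \xi^c(A) \;=\; \sum_{xy \in E_{\mathrm{common}}} \bigl[\Delta(x)+\Delta(y)\bigr] \;+\; \Delta_{\mathrm{edge}},
$$
where $\Delta_{\mathrm{edge}} := \varepsilon_{A'}(v_p) + \varepsilon_{A'}(v_{p+1}) - \varepsilon_A(u_{q-1}) - \varepsilon_A(u_q)$.

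Next I would derive the explicit formulas $\varepsilon_A(y) = \max(\varepsilon_G(y),\, d_G(y,w)+p)$ for $y\in V(G)$, $\varepsilon_A(v_i) = \max(p-i,\,i+q,\,i+D)$ for $1\le i\le p$, and (using $p\ge q$) $\varepsilon_A(u_j) = j+\max(p,D)$ for $1\le j\le q$, together with the analogues in $A'$. Substituting into $\Delta_{\mathrm{edge}}$ simplifies it to
$$
\Delta_{\mathrm{edge}} \;=\; 2(p-q) + 2 + 2\bigl[\max(q-1,D) - \max(p,D)\bigr],
$$
which equals $0$ when $D\le q-1$ and is at least $2$ when $D\ge q$. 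This motivates splitting the rest of the analysis on this dichotomy.

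If $D\ge q$, every $\Delta(v)$ is nonnegative: $\Delta(y)\in\{0,1\}$ for $y\in V(G)$; $\Delta(u_j) = \max(p+1,D)-\max(p,D)\ge 0$; and since $i+D\ge i+q>i+q-1$, the eccentricity formulas collapse to $\varepsilon_A(v_i)=\max(p-i,i+D)$ and $\varepsilon_{A'}(v_i)=\max(p+1-i,i+D)\ge \varepsilon_A(v_i)$, so $\Delta(v_i)\ge 0$. Thus the common-edge sum is nonnegative and $\Delta_{\mathrm{edge}}\ge 2$, giving $\xi^c(A')-\xi^c(A)\ge 2$. If instead $D\le q-1$ (which forces $q\ge 2$, since $D\ge 1$), the triangle inequality $\varepsilon_G(y)\le d_G(y,w)+D$ combined with $p\ge q\ge D+1$ yields $d_G(y,w)+p \ge \varepsilon_G(y)+1$, so $\Delta(y)=1$ for every $y\in V(G)$; summing the common-edge degrees over $V(G)$ produces a contribution of exactly $2n_G$. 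A case analysis on the path vertices, splitting the range of $i$ around $(p-q)/2$ according to which of $p-i$ and $i+q$ dominates, shows that after accounting for the degree changes at $v_p$ (from $1$ to $2$) and at $u_{q-1}$ (from $2$ to $1$), the path vertices together contribute exactly $-2$, independent of the parity of $p-q$. Together with $\Delta_{\mathrm{edge}}=0$ this yields $\xi^c(A')-\xi^c(A) = 2n_G - 2 \ge 2$. The main obstacle is precisely this second case: several indices $i > (p-q)/2$ give $\Delta(v_i)=-1$, and verifying that these negative contributions cancel with the positive $\Delta(v_i)$ for small $i$ and the $\Delta(u_j)=+1$ values to leave a clean position- and parity-independent net of $-2$ is the computational heart of the argument, after which the nontriviality of $G$ (so $n_G\ge 2$) closes the proof.
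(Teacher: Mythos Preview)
Your argument is correct and follows a genuinely different route from the paper. The paper works directly with the vertex form $\sum_v \deg(v)\,\varepsilon(v)$, tracking the degree changes at $u_{q-1},u_q,v_p$ and the eccentricity shifts, and splits into \emph{three} cases according to whether $D=\varepsilon_G(w)$ is $>p$, in $(q,p]$, or $\le q$; each case is closed by an ad hoc inequality. You instead pass to the edge-sum identity, isolate the single relocated edge in $\Delta_{\mathrm{edge}}$, and reduce to \emph{two} cases ($D\ge q$ versus $D\le q-1$). The payoff is a uniform quantitative bound $\xi^c(A')-\xi^c(A)\ge 2$ and, in the second case, an exact closed formula; the paper's argument gives only strict positivity and is somewhat loose in its Case~3. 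Your verification that the path vertices contribute exactly $-2$ (the $(1-2q)+(2q-3)$ computation) is the right crux and goes through as you describe.

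One slip to fix: in the case $D\le q-1$ you assert that the $V(G)$-contribution to the common-edge sum is ``exactly $2n_G$''. In fact
\[
\sum_{y\in V(G)} d_y^{\,\mathrm{common}} \;=\; \sum_{y\in V(G)}\deg_G(y) \;+\; 2 \;=\; 2|E(G)|+2,
\]
since $w$ picks up the two extra common edges $wv_1$ and $wu_1$; this equals $2n_G$ only when $G$ is a tree. For a general connected $G$ it is $\ge 2n_G$, so your final inequality survives (the exact difference is $2|E(G)|$ rather than $2n_G-2$), but ``exactly'' should be replaced by ``at least''.
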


\begin{proof}
The degrees of vertices $u_{q - 1}$ and $v_p$ are changed, while all other vertices have the same
degree in $G (p + 1, q - 1)$ as in $G (p, q)$\,. Since after this transformation
the longer path has increased, the eccentricity of vertices from $G$ are either
the same or increased by one. We will consider three cases based on the longest path
from the vertex $w$ in the graph $G$\,. Denote by $deg' (v)$ and $\varepsilon'(v)$
the degree and eccentricity of vertex $v$ in $G(p+1,q-1)$\,.

\noindent
{\bf Case 1. } The length of the longest path from the vertex $w$ in $G$ is greater than $p$\,.
This means that the pendent vertex of $G$\,, most distant from $w$ is the most
distant vertex for all vertices of $P$ and $Q$\,. It follows that
$\varepsilon_{G (p + 1, q - 1)} (v) = \varepsilon_{G (p, q)} (v)$ for all vertices
$w, v_1, v_2, \ldots, v_p, u_1, u_2, \ldots, u_{q - 1}$\,, while the eccentricity
of $u_q$ increased by $p + 1 - q$\,.
\begin{eqnarray*}
\xi^c (G (p + 1, q - 1)) - \xi^c (G (p, q)) &\geq&
\left[ deg' (u_{q - 1})\,\varepsilon' (u_{q - 1})
+ deg' (u_{q})\,\varepsilon'\,(u_{q}) + deg' (v_{p})\,\varepsilon' (v_{p}) \right] \\
&-& \left[ deg (u_{q - 1})\,\varepsilon (u_{q - 1}) + deg (u_{q})\,\varepsilon (u_{q}) +
deg (v_{p})\,\varepsilon (v_{p}) \right] \\
&=& - \varepsilon (u_{q - 1}) + (p - q + 1) + \varepsilon (v_p) > 0 \ .
\end{eqnarray*}

\noindent {\bf Case 2. } The length of the longest path from the vertex $w$ in $G$ is less than or
equal to $p$ and greater than $q$\,. This means that either the vertex of $G$ that is most distant
from $w$ or the vertex $v_p$ is the most distant vertex for all vertices of $P$\,, while for
vertices $w, u_1, u_2, \ldots, u_q$ the most distant vertex is $v_p$\,. It follows that
$\varepsilon_{G (p + 1, q - 1)} (v) = \varepsilon_{G (p, q)} (v)$ for vertices $v_1, v_2, \ldots,
v_p$\,, while $\varepsilon_{G (p + 1, q - 1)} (v) = \varepsilon_{G (p, q)} (v) + 1$ for vertices
$w, u_1, u_2, \ldots, u_{q - 1}$\,. The eccentricity of $u_q$ increased by at least $1$\,.
\begin{eqnarray*}
\xi^c (G (p + 1, q - 1)) - \xi^c (G (p, q)) &\geq& deg' (w)\,\varepsilon' (w) +
deg' (v_{p})\,\varepsilon' (v_{p}) + \sum_{j = 1}^q deg' (u_{j})\,\varepsilon' (u_{j})\\
&-& deg (w)\,\varepsilon (w) - deg (v_{p}) \,\varepsilon (v_{p}) -
\sum_{j = 1}^q deg (u_{j})\,\varepsilon (u_{j})\\
&\geq& q + \left[ \varepsilon (u_{q - 1}) + 1 \right]
\left[ deg (u_{q - 1}) - 1 \right] -
\varepsilon (u_{q - 1})\,deg (u_{q - 1}) + \varepsilon (v_p)\\
&>& \varepsilon (v_p) - \varepsilon (u_{q - 1}) > 0 \ .
\end{eqnarray*}

\noindent
{\bf Case 3. } The length of the longest path from the vertex $w$ in $G$ is
less than or equal to $q$\,. This means that the pendent vertex most distant from
the vertices of $P$ and $Q$ is either $v_p$ or $u_q$\,, depending on the position.
Using the formula for eccentric connectivity index of a path, we have
\begin{eqnarray*}
\xi^c (G (p + 1, q - 1)) - \xi^c (G (p, q)) &>& \xi^c (P_{p + q + 1}) +
[deg (w) - 2]\,\varepsilon' (w) \\
&-& \xi^c (P_{p + q + 1}) - [deg (w) - 2]\,\varepsilon (w) \\
&=& deg (w) - 2 \geq 0 \ .
\end{eqnarray*}
Since $G$ is a nontrivial graph with at least one vertex, we have strict inequality.

This completes the proof.
\end{proof}

\begin{thm}
\label{thm-broom} Let $T \not \cong B_{n, \Delta}$ be an arbitrary
tree on $n$ vertices with maximum vertex degree $\Delta$\,. Then
$$
\xi^c (B_{n, \Delta}) > \xi^c (T) \ .
$$
\end{thm}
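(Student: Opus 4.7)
The plan is to show that any tree $T$ with $\Delta(T) = \Delta$ that is not the broom can be converted into $B_{n,\Delta}$ by a finite sequence of moves of the form described in Theorem~\ref{thm-pi}, each of which strictly increases $\xi^c$. Two phases are needed: first reduce $T$ to a $\Delta$-starlike tree by eliminating extraneous branching vertices, then reshape that starlike tree into a broom. Throughout, the crucial invariant is that the maximum degree remains exactly $\Delta$.

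Let $v$ be a vertex of $T$ with $\deg(v) = \Delta$ and root $T$ at $v$. In Phase~1, as long as $T$ has a branching vertex other than $v$, choose a vertex $u \neq v$ with $\deg(u) \geq 3$ at maximum depth from $v$; by the choice of $u$, every downward branch of $u$ is a pendent path, so $u$ carries $\deg(u) - 1 \geq 2$ pendent paths. Applying Theorem~\ref{thm-pi} at $w = u$ to two such pendent paths of lengths $p \geq q \geq 1$ and iterating the move $(p,q) \mapsto (p+1,q-1)$ shifts length from the shorter path onto the longer, finally absorbing the shorter one (when $q$ reaches $0$) so that the two pendent paths merge into one. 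After consolidating all $\deg(u)-1$ pendent paths at $u$ into a single one, $\deg(u)$ drops to $2$ and $u$ is no longer a branching vertex. Each such move strictly increases $\xi^c$ and touches only degrees at $u$ and at internal endpoints of the two modified paths; in particular $v$ is not touched, so the maximum degree remains $\Delta$. Repeating on the remaining extraneous branching vertices, I obtain a $\Delta$-starlike tree $T(n_1,n_2,\ldots,n_\Delta)$.

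In Phase~2 the only branching vertex is $v$, with $\Delta$ pendent arms of lengths $n_1,\ldots,n_\Delta$. If this starlike tree is not $B_{n,\Delta}$, then at least two arms have length $\geq 2$; choose a longest arm of length $p$ and a shortest-among-those-of-length-$\geq 2$ arm of length $q \geq 2$. Applying Theorem~\ref{thm-pi} at $w = v$ to these two arms produces arms of lengths $p+1$ and $q-1 \geq 1$, so $v$ retains all $\Delta$ neighbors and $\Delta(T) = \Delta$ is preserved, while $\xi^c$ strictly increases. The number of arms of length $\geq 2$ is nonincreasing under this move (it decreases precisely when $q=2$), so after finitely many iterations exactly one arm has length $\geq 2$; since the total number of edges is $n-1$, the resulting tree is $T(n-\Delta,1,\ldots,1) = B_{n,\Delta}$.

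Composing the strict inequalities from all moves in both phases yields $\xi^c(T) < \xi^c(B_{n,\Delta})$. The only real subtlety is the degree bookkeeping, namely ensuring the maximum degree never drops below $\Delta$: this is why Phase~1 must be carried out at branching vertices $w \neq v$ (where the $q=1$ move that lowers $\deg(w)$ is harmless), and why Phase~2 must restrict to $q \geq 2$.
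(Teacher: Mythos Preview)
Your proof is correct and follows essentially the same two-phase strategy as the paper: root at a vertex $v$ of degree $\Delta$, repeatedly apply Theorem~\ref{thm-pi} at the deepest remaining branching vertex to reduce each subtree to a path, and then apply Theorem~\ref{thm-pi} at $v$ to collapse all but one arm to length~$1$. Your write-up is in fact more careful than the paper's in making explicit both the termination argument and the degree invariant (restricting Phase~2 to $q\geq 2$ so that $\deg(v)=\Delta$ is never lost).
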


\begin{proof}
Fix a vertex $v$ of degree $\Delta$ as a root and let
$T_1, T_2, \ldots, T_{\Delta}$ be the trees
attached at~$v$\,. We can repeatedly apply the transformation described
in Theorem \ref{thm-pi} to any vertex of degree at least three with
greatest eccentricity from the root in every tree~$T_i$\,, as long as
$T_i$ does not become a path. When all trees $T_{1} ,T_{2},\dots, T_{\Delta}$
turn into paths, we can again apply transformation from Theorem~\ref{thm-pi}
at the vertex~$v$ as long as there exists at least two paths of length greater
than one, further decreasing the eccentric connectivity index. Finally, we arrive
at the broom $B_{n, \Delta}$ as the unique tree with maximum eccentric
connectivity index.
\end{proof}

By direct verification, it holds
$$
\xi^c (BT_{n, \Delta}) = \left \lfloor \frac{3n^2 - 2\Delta n - 2n -
\Delta^2 +4\Delta}{2} \right \rfloor .
$$

From the above proof, we also get that
$B'_{n,\Delta} = T(n-\Delta-1,2,1,\ldots,1)$ has the second minimal
$\xi^c$ among trees with maximum vertex degree $\Delta$\,.

It was proven in \cite{ZhDu09} that the path $P_n$ has maximum and the
star $S_n$ minimum $\xi^c$-value among connected graphs on $n$ vertices.
From Theorem~\ref{thm-broom} we know that the maximum eccentric connectivity
index among trees on $n$~vertices is achieved for one of the brooms
$B_{n,\Delta}$\,. If $\Delta>2$\,, we
can apply the transformation from Theorem~\ref{thm-pi} at the
vertex of degree~$\Delta$ in $B_{n, \Delta}$ and obtain
$B_{n, \Delta-1}$\,. Thus, it follows
$$
EE(S_{n}) = EE(B_{n,n-1}) < EE(B_{n,n-2}) < \cdots < EE(B_{n,3})<
EE(B_{n,2})=EE(P_{n}) \ .
$$

Also, it follows that $B_{n, 3}$ has the second maximum eccentric connectivity
index among trees on $n$ vertices.

\section{The minimum eccentric connectivity index of trees with fixed \\radius}

Vertices of minimum eccentricity form the center. A tree has exactly one
or two adjacent center vertices; in this latter case one speaks of a
bicenter. In what follows, if a tree has a bicenter, then our considerations
apply to any of its center vertices.

For a tree $T$ with radius $r(T)$\,,
$$
\label{tree_center} d (T) = \left\{
\begin{array}{l l}
  2\,r(T) - 1 & \quad \mbox{if $T$ has a bicenter }\\[3mm]
  2\,r (T) & \quad \mbox{if $T$ has has a center. }\\
\end{array} \right.
$$

Let $T_{(n, d)}$ be the set of $n$-vertex trees obtained from the
path $P_{d + 1} = v_0 v_1 \ldots v_d$ by attaching
$n - d - 1$ pendent vertices to $v_{\lfloor d/2 \rfloor}$ and/or
$v_{\lceil d/2 \rceil}$\,, where $2 \leq d \leq n - 2$\,.
Zhou and Du in \cite{ZhDu09} proved that for arbitrary tree $T$ on $n$
vertices and diameter $d$\,,
$$
\xi^c (T) \geq \xi (T^*)\ , \quad T^* \in  T_{(n, d)}
$$
with equality if and only if $T \in T_{(n, d)}$\,.
Using the transformation from Theorem \ref{thm-pi} and applying it to a
center vertex, it follows that $\xi^c (T') < \xi^c (T'')$ for $T' \in T_{(n, 2r-1)}$
and $T'' \in T_{(n, 2r)}$\,.

\begin{cor}
Let $T$ be an arbitrary tree on $n$ vertices with radius $r$\,. Then
$$
\xi^c (T) \geq 3r (2r - 1) + 2 + (n - 2r)(2r + 1)
$$
with equality if and only if $T \in T_{(n, 2r-1)}$\,.
\end{cor}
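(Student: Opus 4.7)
The plan is to combine the two facts established in the paragraph immediately preceding the corollary. Every tree $T$ of radius $r$ has diameter $d(T)\in\{2r-1,\,2r\}$, so the class of $n$-vertex radius-$r$ trees is the disjoint union of the diameter-$(2r-1)$ trees and the diameter-$2r$ trees. Within each diameter class, the Zhou--Du result quoted just above identifies $T_{(n,d)}$ as the unique set of minimizers of $\xi^c$, while the strict inequality $\xi^c(T')<\xi^c(T'')$ for $T'\in T_{(n,2r-1)}$ and $T''\in T_{(n,2r)}$, obtained by applying Theorem~\ref{thm-pi} at a center vertex, shows that the diameter-$(2r-1)$ side wins. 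I would therefore begin by chaining these two facts to conclude that the minimum of $\xi^c$ over all radius-$r$ trees on $n$ vertices is attained precisely on $T_{(n,2r-1)}$, which takes care of the ``if and only if'' clause at once.

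The remaining task is to compute $\xi^c(T)$ for an arbitrary $T\in T_{(n,2r-1)}$ and check that it equals the right-hand side of the stated inequality. Such a $T$ is the path $v_0 v_1 \cdots v_{2r-1}$ together with $n-2r$ pendent leaves distributed in any way between the two bicenter vertices $v_{r-1}$ and $v_r$. I would first verify that, for $r\geq 2$, any such leaf lies at distance at most $r+1\leq 2r-1$ from every other vertex, so that attaching the leaves leaves the path eccentricity $\varepsilon(v_i)=\max(i,\,2r-1-i)$ unchanged, while each attached leaf itself has eccentricity $r+1$. Second, I would note that the combined degree $\deg(v_{r-1})+\deg(v_r)=4+(n-2r)$ is independent of how the leaves are split, and both bicenter vertices have eccentricity $r$, so $\xi^c(T)$ depends only on $n$ and $r$. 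Finally, I would sum $\deg(v)\,\varepsilon(v)$ over the two path endpoints (eccentricity $2r-1$), the $2(r-2)$ interior non-bicenter path vertices (which come in symmetric pairs of eccentricity $2r-2,\,2r-3,\,\dots,\,r+1$), the two bicenter vertices, and the $n-2r$ leaves, and collapse the result into the stated closed form.

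Since all the structural work has already been supplied by Zhou--Du and Theorem~\ref{thm-pi}, I do not expect a genuine obstacle. The one place where care is required is the verification that attaching leaves at a bicenter really does not perturb the path eccentricities (which needs $r\geq 2$); after that, the argument reduces to routine arithmetic in $n$ and $r$.
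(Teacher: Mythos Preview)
Your proposal is correct and follows exactly the paper's approach: the paper derives the corollary from the Zhou--Du diameter result together with the strict inequality $\xi^c(T')<\xi^c(T'')$ obtained via Theorem~\ref{thm-pi} at a center vertex, and then states the closed-form value without further computation. Your plan merely spells out the arithmetic that the paper leaves implicit (and your observation that the eccentricity check needs $r\geq 2$ is a welcome piece of care the paper omits).
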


\vspace{5mm}

\section{The maximum eccentric connectivity index of trees with \\ perfect matchings}

A graph possessing perfect matchings must have an even number of vertices.
Therefore throughout this section we assume that $n$ is even.

It is well known that if a tree $T$ has a perfect matching,
then this perfect matching $M$ is unique:
namely, a pendent vertex $v$ has to be matched with its unique neighbor $w$\,,
and then $M-\{vw\}$ forms the perfect matching of $T-v-w$\,.

Let $A_{n, \Delta}$ be a $\Delta$-starlike tree $T(n-2\,\Delta,2,2,\ldots,2,1)$
consisting of a central vertex $v$\,,
a pendent vertex, a pendent path of length $n-2\,\Delta$\,, and
$\Delta - 2$ pendant paths of length $2$\,, all attached to $v$\,.

\begin{thm}
The tree $A_{n,\Delta}$ has maximum eccentric connectivity index among trees
with perfect matching and maximum vertex degree $\Delta$\,.
\end{thm}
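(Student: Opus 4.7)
The plan is to follow the pattern of Theorem~\ref{thm-broom}, but inside the class of trees having a perfect matching. Starting from an arbitrary tree $T$ with maximum degree $\Delta$ and a (necessarily unique) perfect matching, I would apply a sequence of local moves that strictly increase $\xi^c$ while preserving both the maximum degree $\Delta$ and the existence of a perfect matching, and show that the only tree admitting no further improving move is $A_{n,\Delta}$. The single-step transformation of Theorem~\ref{thm-pi} is the main engine, but it flips the parities of the two pendent paths involved, and this flip can destroy the unique perfect matching, so one must either restrict when single steps are fired or iterate them in pairs.

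At a vertex $w$ carrying two pendent paths $P,Q$ of vertex-counts $p\ge q\ge 1$, the unique perfect matching of $T$ forces at most one of $p,q$ to be odd: a pendent path of odd length matches its first vertex with $w$, and $w$ can be matched only once. This yields a parity-aware toolbox with two moves. If exactly one of $p,q$ is odd, Theorem~\ref{thm-pi} applies directly, because the two parities simply swap and $w$ switches matching partner between $P$ and $Q$, so a perfect matching still exists. If both $p,q$ are even, a single step would create two odd branches at $w$ and obstruct the matching; instead one applies Theorem~\ref{thm-pi} twice to obtain the double step $(p,q)\to(p+2,q-2)$, which preserves parities and hence the matching, provided $q-2\ge 1$ so that no branch vanishes at a vertex that must retain its degree. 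In either case the maximum degree is unchanged and $\xi^c$ strictly increases, by one or two applications of Theorem~\ref{thm-pi}.

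Equipped with this toolbox, I would root $T$ at a vertex $v$ of degree $\Delta$ and, exactly as in the proof of Theorem~\ref{thm-broom}, fire the appropriate move at the deepest branching vertex of each subtree $T_1,\ldots,T_\Delta$ hanging at $v$, turning every $T_i$ into a pendent path without altering the degree of $v$ or destroying the matching. The tree is then a $\Delta$-starlike tree $T(n_1,\ldots,n_\Delta)$ with exactly one odd entry. A case analysis at $v$ then shows that whenever the current tree is not $A_{n,\Delta}$, one of the two moves is available and strictly increases $\xi^c$: double steps between even branches of length at least $4$ transfer mass into the longest even branch until at most one even branch exceeds length $2$, and single steps involving the odd branch relocate the oddness until the odd branch has length $1$. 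The process terminates at $A_{n,\Delta}$, which is therefore the unique maximum. The main obstacle is precisely this last case analysis at $v$: one must check that no admissible starlike tree other than $A_{n,\Delta}$ is locally maximal for $\xi^c$, carefully tracking parities at each step so that every move is a legitimate instance of Theorem~\ref{thm-pi} applied to a subtree that still has a perfect matching.
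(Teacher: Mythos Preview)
Your approach is essentially the paper's: root at a vertex $v$ of degree $\Delta$, use Theorem~\ref{thm-pi} to straighten each subtree into a pendent path, then work at $v$ with the two-vertex step $(p,q)\mapsto(p+2,q-2)$ to reach $A_{n,\Delta}$. The paper simply asserts that the straightening phase ``keeps the existence of a perfect matching'' and then introduces the double step for the final reduction, whereas you supply the parity bookkeeping that actually justifies matching preservation and additionally observe that a single step is already admissible whenever exactly one of $p,q$ is odd; this extra tool is not in the paper but only streamlines the final case analysis rather than changing the strategy.
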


\begin{proof}
Let $T$ be an arbitrary tree with perfect matching and let
$v$ be a vertex of degree $\Delta$\,,
with neighbors $v_1, v_2, \ldots, v_{\Delta}$\,.
Let $T_1, T_2, \ldots, T_{\Delta}$ be the maximal subtrees
rooted at $v_1, v_2, \ldots, v_{\Delta}$\,, respectively.
Then at most one of the numbers $|T_1|, |T_2|, \ldots, |T_{\Delta}|$
can be odd (if $T_i$ and $T_j$ have odd number of vertices, then their
roots $v_i$ and $v_j$ will be unmatched). Since the number of vertices
of $T$ is even, there exists exactly one among
$T_1, T_2, \ldots,T_{\Delta}$ with odd number of vertices.

Using Theorem \ref{thm-pi}, we may transform each $T_i$ into a path
attached to $v$ -- while simultaneously decreasing $\xi^c$ and keeping the
existence of a perfect matching.
Assume that $T_{\Delta}$ has odd number of vertices,
while the remaining trees have even number of vertices.
We apply a transformation similar to the one in Theorem \ref{thm-pi},
but instead of moving one vertex, we move two vertices
in order to keep the existence of a perfect matching.
Thus, if $p \geq q \geq 2$ then
$$
\xi^c (G (p, q)) < \xi^c (G (p + 2, q - 2)) \ .
$$
Using this transformation we may reduce $T_{\Delta}$ to one vertex,
the trees $T_2, \ldots, T_{\Delta - 1}$ to two vertices,
leaving $T_1$ with $n - 2\Delta$ vertices, and thus obtaining $A_{n,\Delta}$\,.
Since all times we strictly decreased $\xi^c$\,,
we conclude that $A_{n, \Delta}$ has minimum eccentric connectivity index
among the trees with perfect matching and maximum vertex degree $\Delta$\,.
\end{proof}

The path $P_n \cong A_{n,2}$ has maximum, while $A_{n,n/2}$ has minimum
eccentric connectivity index among trees with perfect matchings.

\vspace{5mm}

\section{The minimum eccentric connectivity index of trees  \\
with fixed number of pendent vertices}

In \cite{ZhDu09} the authors determinate the $n$-vertex trees with $p$ pendent vertices,
$2 \leq p \leq n - 1$\,, with the maximum eccentric connectivity index, and, consecutively, the extremal trees
with the maximum, second-maximum and third-maximum eccentric connectivity index for $n \geq 6$\,.
For the completeness, here we determine the $n$-vertex trees with $2 \leq p \leq n - 1$ pendent vertices
that have minimum eccentric connectivity index.

\newpage

\begin{de}
Let $v$ be a vertex of a tree $T$ of degree $m + 1$\,. Suppose that $P_1, P_2, \ldots, P_m$ are
pendent paths incident with $v$\,, with lengths $1 \leq n_1 \leq n_2 \leq \ldots \leq n_m$\,. Let
$w$ be the neighbor of $v$ distinct from the starting vertices of paths $v_1, v_2, \ldots, v_m$\,,
respectively. We form a tree $T' = \delta (T, v)$ by removing the edges $v v_1, v v_2, \ldots, v
v_{m - 1}$ from $T$ and adding $m - 1$ new edges $w v_1, w v_2, \ldots, w v_{m - 1}$ incident with
$w$\,. We say that $T'$ is a $\delta$-transform of $T$ and write $T' = \delta (T, v)$\,.
\end{de}

\begin{thm}
\label{thm-delta} Let $T' = \delta (T, v)$ be a $\delta$-transform of a tree $T$ of order $n$\,.
Let $v$ be a non-central vertex, furthest from the root among all branching vertices (with degree
greater than~$2$). Then
$$
\xi^c (T) > \xi^c (T')\,.
$$
\end{thm}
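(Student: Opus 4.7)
The plan is to decompose $\xi^c(T)-\xi^c(T')$ into a degree-change piece contributed by $v$ and $w$ (the only vertices whose degrees are altered by the $\delta$-transform) and a sum of eccentricity-change pieces contributed by every vertex. Since $\deg_T(v)=m+1$ drops to $\deg_{T'}(v)=2$ and $\deg_{T'}(w)=\deg_T(w)+m-1$, a routine rearrangement gives
\begin{equation*}
\xi^c(T)-\xi^c(T') \;=\; \sum_{x\in V(T)}\deg_T(x)\bigl[\varepsilon_T(x)-\varepsilon_{T'}(x)\bigr]\;+\;(m-1)\bigl[\varepsilon_{T'}(v)-\varepsilon_{T'}(w)\bigr].
\end{equation*}

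To handle the eccentricities, I would partition $V(T)\setminus\{v\}$ into $S_1$ (the component of $T-v$ containing $w$), $S_2=V(P_m)\setminus\{v\}$, and $S_3=V(P_1\cup\cdots\cup P_{m-1})\setminus\{v\}$. Tracing shortest paths shows that under $\delta$ the distance between two vertices decreases by exactly $1$ when one lies in $S_1$ and the other in $S_3$, increases by exactly $1$ when one lies in $\{v\}\cup S_2$ and the other in $S_3$, and is unchanged in all other cases.

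Next I would cash in the hypothesis that $v$ is non-central and is the branching vertex furthest from the center $c$. This yields three consequences: (i)~the component of $T-vw$ containing $v$ is precisely $\{v\}\cup S_2\cup S_3$; (ii)~$n_m\le r(T)-d(v,c)$, because $\varepsilon(c)=r(T)$ bounds the deepest reach of $c$ through $v$; and, combined with $\varepsilon(v)\ge r(T)+1$ from non-centrality and $d(v,c)\ge 1$, (iii)~the strict gap $D\ge n_m+2$, where $D=d_T(v,\text{farthest vertex of }S_1)$. From (iii) the farthest vertex from each $x\in\{v\}\cup S_2\cup S_3$ lies in $S_1$ in \emph{both} $T$ and $T'$ (it beats every candidate $u_j$ or within-path vertex by at least $2$, so the edge-flip cannot make another candidate win). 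Consequently $\varepsilon_T(x)-\varepsilon_{T'}(x)=1$ for $x\in S_3$, $\varepsilon_T(x)=\varepsilon_{T'}(x)$ for $x\in\{v\}\cup S_2$, and a direct check gives $\varepsilon_{T'}(x)\le\varepsilon_T(x)$ for $x\in S_1$.

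Substituting, the first sum is at least $\sum_{x\in S_3}\deg_T(x)\ge|S_3|\ge m-1$, and the second term equals $m-1$ because $v$ is adjacent to $w$ on the side away from $c$ and their common farthest vertex lies in $S_1$, so $\varepsilon_{T'}(v)=\varepsilon_{T'}(w)+1$. Hence $\xi^c(T)-\xi^c(T')\ge 2(m-1)>0$. The main obstacle I anticipate is establishing the strict gap $D\ge n_m+2$: it cannot be weakened (the equality $D=n_m+1$ would correspond to $v$ belonging to a bicenter, which the hypothesis excludes), and pinning it down requires combining the non-centrality of $v$ with the reach bound on $c$ in a careful way.
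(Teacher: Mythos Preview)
Your proposal is correct and follows essentially the same route as the paper: both arguments identify that only $\deg(v)$ and $\deg(w)$ change (by $\pm(m-1)$), that the eccentricity of every vertex on the relocated paths $P_1,\dots,P_{m-1}$ drops by exactly~$1$, and that all other eccentricities are unchanged, then combine this with $\varepsilon(v)=\varepsilon(w)+1$. The paper asserts the key eccentricity facts more tersely (``the furthest vertex from $v$ does not belong to $P_1,\dots,P_m$'') where you supply the explicit justification via the gap $D\ge n_m+2$; conversely, the paper tracks the degree sum on $S_3$ exactly and notes $S_1$-eccentricities are \emph{equal} (not merely $\le$), obtaining the sharp value $\xi^c(T)-\xi^c(T')=2(n_1+\cdots+n_{m-1})$ rather than your bound $\ge 2(m-1)$.
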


\begin{proof}
The degrees of vertices $v$ and $w$ have changed -- namely, $deg (v) - deg' (v) = deg' (w) - deg
(w) = m - 1$\,. Since the furthest vertex from $v$ does not belong to $P_1, P_2, \ldots, P_m$ and
$n_m \geq n_i$ for $i = 1, 2, \ldots, m - 1$\,, it follows that the eccentricities of all vertices
different from $P_1, P_2, \ldots, P_{m - 1}, P_m$ do not change after $\delta$ transformation. The
eccentricities of vertices from $P_m$ also remain the same, while the eccentricities of vertices
from $P_1, P_2, \ldots, P_{m - 1}$ decrease by one. Using the equality $\varepsilon (v) =
\varepsilon (w) + 1$\,, it follows that
\begin{eqnarray*}
\xi^c (T) - \xi^c (T') &=& \sum_{i = 1}^{m - 1} (1 + 2(n_i - 1)) + (m - 1) \cdot \varepsilon (v) - (m - 1) \cdot \varepsilon (w) \\
&=& 2 \left ( n_1 + n_2 + \ldots + n_{m - 1} \right) - (m - 1) + (m - 1)(\varepsilon (v) - \varepsilon (w)) \\
&=& 2 \left ( n_1 + n_2 + \ldots + n_{m - 1} \right) > 0\,.
\end{eqnarray*}
This completes the proof.
\end{proof}

The $p$-starlike tree $SB_{n, p} = T(n_1, n_2, \ldots, n_p)$ is {\it balanced\/} if all paths have
almost equal lengths, i.e., $|n_i - n_j| \leq 1$ for every $1 \leq i \leq j \leq p$\,.

\begin{thm}
The balanced $p$-starlike tree $SB_{n, p}$ has minimum eccentric connectivity index among trees
with $p$ pendent vertices, $2 < p < n - 1$\,.
\end{thm}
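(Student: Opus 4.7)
The plan is to reduce any tree $T$ on $n$ vertices with $p$ pendent vertices to the balanced $p$-starlike tree $SB_{n,p}$ in two rounds of transformations, each weakly decreasing $\xi^c$ and preserving $p$; combined, they yield $\xi^c(T)\geq\xi^c(SB_{n,p})$.

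Round 1: Collapse all branching onto the center. So long as $T$ has a non-central branching vertex, pick such a vertex $v$ furthest from the center and apply the $\delta$-transform of Theorem \ref{thm-delta}. The transformation merely re-attaches pendent paths from $v$ to its parent $w$, so no leaf is created or destroyed and the number of pendent vertices is preserved; by Theorem \ref{thm-delta} the eccentric connectivity index strictly drops. Using, say, the sum of depths of branching vertices as a monovariant, the procedure terminates. Generically it ends at a $p$-starlike tree $T(n_1,\ldots,n_p)$ with $n_1+\cdots+n_p=n-1$. The exceptional case, in which both vertices of a bicenter remain branching, is treated separately below.

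Round 2: Balance the path lengths of the resulting starlike tree $T(n_1,\ldots,n_p)$. Whenever two indices satisfy $n_j\geq n_i+2$, view the tree as $G(n_j,n_i)$ in the notation of Theorem \ref{thm-pi}, with $G$ the $(p-2)$-starlike tree obtained by deleting the two distinguished pendent paths. Theorem \ref{thm-pi}, applied with $(p',q')=(n_j-1,\,n_i+1)$ (valid since $n_j-1\geq n_i+1\geq 1$), then gives $\xi^c(G(n_j-1,n_i+1))<\xi^c(G(n_j,n_i))$, so the more balanced starlike tree has strictly smaller $\xi^c$. Iterating strictly decreases the nonnegative integer $\sum_i n_i^2$, so the procedure terminates exactly when $|n_i-n_j|\leq 1$ for all $i,j$, i.e., at $SB_{n,p}$.

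The main obstacle is the bicenter case left over from Round 1. Theorem \ref{thm-delta} requires $v$ to be non-central, since its proof uses $\varepsilon(v)=\varepsilon(w)+1$, and this identity fails when both central vertices are branching. I would resolve this by a direct move at one bicenter vertex $c_1$: re-attach every pendent path of $c_1$ except a longest one to the other bicenter vertex $c_2$, producing a $p$-starlike tree centered at $c_2$. A short calculation using $\varepsilon(c_1)=\varepsilon(c_2)=r$ and the fact that the longest pendent paths at $c_1$ and $c_2$ have common length $r-1$ shows that this move weakly decreases $\xi^c$. The bound can indeed be tight (for instance, with $n=8$, $p=6$ the double spider and $SB_{8,6}$ both have $\xi^c=34$), which is consistent with the theorem asserting that the minimum is attained at $SB_{n,p}$ without claiming uniqueness.
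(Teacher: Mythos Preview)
Your proposal is correct and follows essentially the same route as the paper: repeatedly apply the $\delta$-transform (Theorem~\ref{thm-delta}) to push all branching onto the center, then use Theorem~\ref{thm-pi} to balance the resulting starlike tree; the bicenter case is handled by a $\delta$-move at one center vertex, which (as the paper also notes) leaves $\xi^c$ unchanged, after which balancing proceeds as before. Your observation that equality can occur (and hence that uniqueness fails) matches the paper's remark that for $n\equiv 2\pmod p$ there are $\lfloor p/2\rfloor$ extremal trees.
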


\begin{proof}
Let $T$ be a rooted $n$-vertex tree with $p$ pendent vertices. If
$T$ contains only one vertex of degree greater than two, we can
apply Theorem \ref{thm-pi} in order to arrive at the balanced starlike
tree $SB_{n, p}$\,, without changing the
number of pendent vertices. If $T$ has several vertices of degree
greater than $2$\,, such that there are only pendent paths attached
below them, then we take the one most distant from the center vertex of $T$\,.
By repetitive application of the $\delta$ transformation and
balancing pendant paths, the eccentric connectivity index decreases.

Assume that we arrived at a tree with two centers $C = \{v, w\}$ with
only pendent paths attached at both centers.
If all pendent paths have equal lengths, then $n = k p + 2$\,. Since we
can reattach $p - 2$ pendent paths at any central vertex
without changing $\xi^c (T)$\,, it follows that there are exactly
$\lfloor p/2 \rfloor$ extremal trees with minimum
eccentric connectivity index in this special case.

Now, let $R$ be the path with length $r = r (T) - 1$ attached to
$v$ and let $Q$ be the shortest path of length $q$ attached to
$w$\,. After applying the $\delta$ transformation at vertex $v$\,, the
eccentric connectivity index remains the same. If we apply the transformation
from Theorem \ref{thm-pi} to two pendant paths of lengths $r + 1$ and $q$ attached at $w$\,,
we will strictly decrease the eccentric connectivity index. Finally, we conclude
that $SB_{n, p}$ is the unique extremal tree that minimizes $\xi^c$ among
$n$-vertex trees with $p$ pendent vertices for $n \not \equiv 2 \pmod p$\,.
\end{proof}

\section{Chemical trees with minimal eccentric connectivity index}

\begin{thm}
\label{thm-rot}
Let $T$ be a rooted tree, with a center vertex $c$ as root. Let $u$ be the vertex
closest to the root vertex, such that $deg (u) < \Delta$\,. Let $w$ be the pendent
vertex most distant from the root, adjacent to vertex $v$\,, such that
$\varepsilon (v) > \varepsilon (u)$\,. Construct a tree $T'$ by deleting the edge $vw$ and
inserting the new edge $uw$\,. Then
$$
\xi^c (T) > \xi^c (T') \ .
$$
\end{thm}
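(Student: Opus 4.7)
The plan is a direct computation of $\xi^c(T) - \xi^c(T')$. Since $T'$ differs from $T$ only in the edge incident to the pendent $w$, the graphs $T-w$ and $T'-w$ are identical; writing $\varepsilon_0(x)$ for the eccentricity of $x$ in this common subgraph, for every $x \neq w$
$$
\varepsilon_T(x) \;=\; \max\{\varepsilon_0(x),\, d_T(x,v)+1\}, \qquad \varepsilon_{T'}(x) \;=\; \max\{\varepsilon_0(x),\, d_T(x,u)+1\},
$$
while $\varepsilon_T(w) = 1+\varepsilon_0(v)$ and $\varepsilon_{T'}(w) = 1+\varepsilon_0(u)$. Only the degrees at $u$ and $v$ change, with $\deg_{T'}(u)=\deg_T(u)+1$ and $\deg_{T'}(v)=\deg_T(v)-1$. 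Peeling off these two degree shifts gives the identity
$$
\xi^c(T) - \xi^c(T') \;=\; \sum_{x\in V(T)} \deg_T(x)\bigl(\varepsilon_T(x)-\varepsilon_{T'}(x)\bigr) \;+\; \bigl(\varepsilon_{T'}(v)-\varepsilon_{T'}(u)\bigr).
$$

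I would first collect the easy positive contributions. The extra term on the right is strictly positive: from the max-formula one has $\varepsilon_{T'}(v) \geq \varepsilon_0(v) = \varepsilon_T(v)$ and $\varepsilon_{T'}(u) = \varepsilon_0(u) \leq \varepsilon_T(u)$ (the outer equalities using that both $u$ and $v$ have non-$w$ neighbors), so the hypothesis $\varepsilon_T(v)>\varepsilon_T(u)$ propagates to $\varepsilon_{T'}(v)>\varepsilon_{T'}(u)$. Inside the big sum, the key positive contribution is from $x=w$, where
$$
\varepsilon_T(w)-\varepsilon_{T'}(w) \;=\; \varepsilon_0(v)-\varepsilon_0(u) \;\geq\; \varepsilon_T(v)-\varepsilon_T(u) \;>\; 0.
$$
Furthermore, any $x\neq w$ with $d_T(x,v) \geq d_T(x,u)$ (those lying on the $u$-side of the unique $uv$-path in $T-w$) yields a nonnegative summand, again by the max-formula.

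The main obstacle — and the part I expect to be delicate — is controlling the vertices $x$ with $d_T(x,u)>d_T(x,v)$, whose eccentricities can rise by as much as $d_T(x,u)-d_T(x,v)$. The max-formula tells us that such an increase occurs only when the unique farthest vertex from $x$ in $T$ was $w$ itself, i.e.\ only when $\varepsilon_0(x)<d_T(x,v)+1$; thus only ``shallow'' vertices on the $v$-side contribute negatively. To close the argument, I would exploit both structural inputs of the statement: that every ancestor of $u$ has degree $\Delta$ (so the upper part of $T$ is saturated, forcing most vertices on the $v$-side to have siblings of depth comparable to $r$, and hence $\varepsilon_0(x)$ already large enough that no increase occurs), together with the fact that $v$ is at depth $r-1$ while $u$ sits on the saturated chain near the center. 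A careful per-vertex tabulation along the $uv$-path then shows the residual negative contribution is strictly dominated by the $w$-decrease plus $\varepsilon_{T'}(v)-\varepsilon_{T'}(u)$, giving $\xi^c(T)>\xi^c(T')$.
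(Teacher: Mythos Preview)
Your decomposition and the identity
$$
\xi^c(T) - \xi^c(T') \;=\; \sum_{x\in V(T)} \deg_T(x)\bigl(\varepsilon_T(x)-\varepsilon_{T'}(x)\bigr) + \bigl(\varepsilon_{T'}(v)-\varepsilon_{T'}(u)\bigr)
$$
are correct, and you correctly isolate the positive contribution from $x=w$ and from the degree-shift term. Where you go astray is in the ``main obstacle'': the negative contribution you worry about is in fact zero. Under the hypotheses, \emph{no} vertex $x\neq w$ satisfies $\varepsilon_{T'}(x)>\varepsilon_T(x)$.

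The paper's argument pinpoints exactly why. Because $c$ is a center of $T$, there is a vertex $w'$ in a subtree of $c$ different from the one containing $w$ with $d(c,w')\in\{r(T)-1,\,r(T)\}$; and the hypothesis $\varepsilon(v)>\varepsilon(u)$ forces $d(c,w')>d(c,u)$. Consequently the new position of $w$ (at depth $d(c,u)+1$) is no deeper than $w'$, and for every $x\neq w$ one already has in $T\setminus\{w\}$ a vertex strictly farther from $x$ than $u$, so that $\varepsilon_0(x)\geq d_T(x,u)+1$. Your own max-formula then gives $\varepsilon_{T'}(x)=\varepsilon_0(x)\leq \varepsilon_T(x)$ immediately. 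With this observation every summand in your sum is nonnegative, and the inequality follows in one line from the three terms at $u,v,w$:
$$
\xi^c(T)-\xi^c(T')\;\geq\; \varepsilon(v)+\bigl(\varepsilon(w)-\varepsilon'(w)\bigr)-\varepsilon(u)\;\geq\;2\bigl(\varepsilon(v)-\varepsilon(u)\bigr)>0.
$$

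So your final paragraph is both unnecessary and, as written, not a proof: the ``careful per-vertex tabulation along the $uv$-path'' is never carried out, and the appeal to ancestors of $u$ having degree $\Delta$ is only a heuristic. The structural input you need is not the saturation of the top of the tree per se, but the single consequence that the center admits a second deep branch $w'$ with $d(c,w')>d(c,u)$; once you use that, the obstacle disappears entirely.
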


\begin{proof}
In the transformation $T \to T'$ the degrees of vertices other than $u$
and $v$ remain the same, while $deg' (u) = deg (u) + 1$ and $deg' (v) = deg (v) - 1$\,.
Since the tree is rooted at the center vertex, the radius of $T$ is equal
to $r (T) = d (c, w)$\,. Furthermore, there exists a vertex $w'$ in a different
subtree attached to the center vertex, such that
$d (c, w') = r (T)$ or $d (c, w') = r (T) - 1$\,.
From the condition $\varepsilon (v) > \varepsilon (u)$\,, it follows that
$d (c, w') > d (c, u)$ and $w' \neq u$\,.

By rotating the edge $vw$ to $uw$\,, the eccentricity of vertices other than $w$ decrease if and
only if $w$ is the only vertex at distance $r (T)$ from the center vertex. Otherwise, the
eccentricities remain the same. In both cases, we have
\begin{eqnarray*}
\xi^c (T) - \xi^c (T') &\geq& deg (v)\,\varepsilon (v) + deg (w)\,\varepsilon (w)
+ deg (u)\,\varepsilon (u)\\
&-& \left[ deg' (v)\,\varepsilon' (v) + deg' (w)\,\varepsilon' (w) +
deg' (u)\,\varepsilon' (u) \right] \\
&\geq& \varepsilon (v) + (\varepsilon (v) - \varepsilon (u)) - \varepsilon (u)
= 2 (\varepsilon (v) - \varepsilon (u)) > 0 \ .
\end{eqnarray*}
This completes the proof.
\end{proof}

The Volkmann tree $VT (n, \Delta)$ is a tree on $n$ vertices and
maximum vertex degree $\Delta$\,, defined as follows \cite{770,FiHo02}.
Start with the root having $\Delta$ children. Every vertex different
from the root, which is not in one of the last two levels, has exactly
$\Delta -1$ children. In the last level, while not all vertices need
to exist, the vertices that do exist fill the level consecutively.
Thus, at most one vertex on the level second to last has its degree
different from $\Delta$ and $1$\,. For more details on Volkmann trees
see \cite{770,FiHo02,GuFMG07}. In \cite{770,FiHo02} it was shown that
among trees with fixed $n$ and $\Delta$\,, the Volkmann tree has
minimum Wiener index. Volkmann trees have also other extremal
properties among trees with fixed $n$ and $\Delta$
\cite{GuFMG07,790,SiTo05,YuLu08}.

\begin{figure}[ht]
  \center
  \includegraphics [width = 6cm]{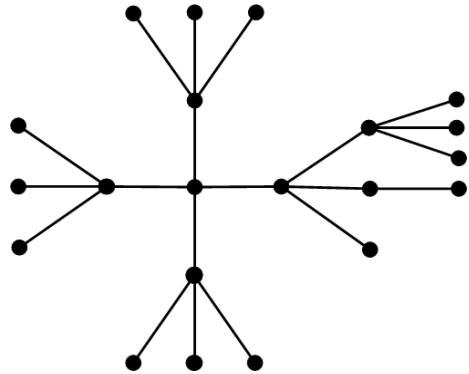}
  \caption { \textit{ The Volkmann tree $VT (21, 4)$\,. } }
\end{figure}

\begin{thm}
\label{thm-volkman} Let $T$ be an arbitrary
tree on $n$ vertices with maximum vertex degree~$\Delta$\,. Then
$$
\xi^c (T) \geq \xi^c (VT_{n, \Delta}).
$$
\end{thm}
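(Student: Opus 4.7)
The plan is to show that any tree $T$ on $n$ vertices with maximum degree~$\Delta$ can be reduced to $VT_{n,\Delta}$ (or to a tree with the same $\xi^c$) by a sequence of edge moves drawn from Theorems~\ref{thm-pi} and~\ref{thm-rot}, each of which does not increase $\xi^c$. First I would root $T$ at one of its centers $c$, set $r = r(T)$, and organize the vertices by level $\ell(x) = d(c,x)$.

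The main reduction is an exhaustive application of Theorem~\ref{thm-rot}: while there exists a vertex $u$ of minimal level with $\deg(u) < \Delta$ together with a pendent $w$ whose neighbor $v$ satisfies $\varepsilon(v) > \varepsilon(u)$, rotate the edge $vw$ to $uw$. Each rotation strictly decreases $\xi^c$, so the procedure must terminate. I would then show that in the terminal tree every vertex at levels $0, 1, \ldots, r-2$ has degree exactly $\Delta$: if some such $u$ at level $k \le r-2$ had $\deg(u) < \Delta$, then any leaf $w$ at level $r$ would have a parent $v$ at level $r-1$ with $\varepsilon(v) = 2r-1 > k + r \ge \varepsilon(u)$, so Theorem~\ref{thm-rot} would still apply -- contradicting termination.

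Once all levels from $0$ to $r-2$ are saturated by degree-$\Delta$ vertices, only levels $r-1$ and $r$ remain to be organized. All level-$(r-1)$ vertices have eccentricity exactly $2r-1$, and a direct sum-of-degrees computation shows that redistributing leaves among them does not alter $\xi^c$; by collecting the remaining leaves to fill the last level consecutively, one arrives at a tree isomorphic to $VT_{n, \Delta}$. This yields $\xi^c(T) \ge \xi^c(VT_{n,\Delta})$.

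The main obstacle is the eccentricity bookkeeping after each rotation: one must verify that $r$ does not increase, that $c$ continues to be a center, and that the level-based eccentricity formulas used above remain valid step by step, so the induction on the number of rotations does not break down. A more subtle issue is the bicenter case $d(T) = 2r - 1$, where either center may be chosen as the root and one must check that both choices converge to the same Volkmann target. Finally, the degenerate small cases $r \le 1$ (in particular the star $S_n \cong VT_{n, n-1}$) should be handled by direct inspection, since the ``levels $0$ through $r-2$'' argument becomes vacuous there.
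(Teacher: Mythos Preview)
Your plan is the same as the paper's---iterate the rotation of Theorem~\ref{thm-rot} until the Volkmann shape is reached---but the bicenter case you flag is not a detail to be checked later; it is exactly where your argument currently breaks, and it is where the paper does the real work.

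Your termination criterion (stop when no pair $u,v$ with $\varepsilon(v)>\varepsilon(u)$ exists) can halt at a non-Volkmann tree. In a bicentral tree rooted at $c$, every level-$r$ leaf lies in the single subtree through the other center $c'$; the parent $v$ of such a leaf then has $\varepsilon(v)=2r-2$, not $2r-1$. A vertex $u$ of degree $<\Delta$ at level $r-2$ \emph{outside} that subtree also has $\varepsilon(u)=2r-2$, so Theorem~\ref{thm-rot} (whose hypothesis is strict inequality) does not fire and your procedure stops with levels $0,\dots,r-2$ still unsaturated. In particular the claim ``all level-$(r-1)$ vertices have eccentricity exactly $2r-1$'' is false here, so the redistribution step collapses.

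The paper closes this gap with two ingredients missing from your sketch. First, it fixes in advance the integer $k$ satisfying $n\ge 1+\Delta+\Delta(\Delta-1)+\cdots+\Delta(\Delta-1)^{k-1}$ and argues directly that an extremal tree must have radius at most $k+1$; this replaces your moving target $r=r(T)$. Second, when $\varepsilon(v)=\varepsilon(u)$ it performs the rotation anyway: the estimate in the proof of Theorem~\ref{thm-rot} gives $\xi^c(T)-\xi^c(T')\ge 2(\varepsilon(v)-\varepsilon(u))=0$, so $\xi^c$ is preserved, and after finitely many such equal-eccentricity moves the leaf $w$ becomes the \emph{unique} vertex at maximal depth, at which point the next rotation strictly decreases $\xi^c$. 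Incorporating these two points would complete your outline.
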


\begin{proof}
Among $n$-vertex trees with maximum degree $\Delta$\,, let $T^*$ be the extremal tree with minimum
eccentric connectivity index. Assume that $u$ is a vertex closest to the root vertex $c$\,, with
$deg (u) < \Delta$ and let $w$ be the pendent vertex most distant from the root, adjacent to vertex
$v$\,. Also, let $k$ be the greatest integer, such that
$$
n \geq 1 + \Delta + \Delta (\Delta - 1) + \Delta (\Delta - 1)^2 +
\cdots + \Delta (\Delta - 1)^{k - 1} \ .
$$

First, we will show that the radius of $T^*$ has to be less than or equal to $k + 1$\,.
Assume that $r (T^*) = d (c, w) > k + 1$\,. Since the distance from the
center vertex to $u$ is less than or equal to $k$\,, it follows that
$$
\varepsilon (v) \geq 2 r (T^*) - 2 \geq k + r (T^*) \geq \varepsilon (u) \ .
$$
If strict inequality holds, then we can apply Theorem \ref{thm-rot} and decrease the eccentric
connectivity index -- which contradicts to the assumption that $T^*$ is the tree with minimum
$\xi^c$\,. Therefore, $\varepsilon (v) = \varepsilon (u)$ and after performing the transformation
from Theorem \ref{thm-rot}, the eccentric connectivity index does not change. According to the
definition of the number $k$\,, after finitely many transformations, the vertex~$w$ will be the
only vertex at distance $r(T)$ from the center vertex and we will strictly decrease $\xi^c
(T^*)$\,. Also, this means that for the case $n = 1 + \Delta + \Delta (\Delta - 1) + \Delta (\Delta
- 1)^2 + \cdots + \Delta (\Delta - 1)^{k - 1}$\,, the Volkmann tree is the unique tree with minimum
eccentric connectivity index.

Now, we can assume that the radius of $T^*$ is equal $k + 1$\,.
If the distance $d (c, u)$ is less than $k - 1$\,, it follows again
that $\varepsilon (v) > \varepsilon (u)$\,, which is
impossible. Therefore, the levels $1, 2, \ldots, k - 1$ are full
(level $i$ contains exactly $\Delta (\Delta - 1)^{i - 1}$ vertices),
while the $k$-th and $(k + 1)$-th levels contain
$$
L = n - \left[ 1 + \Delta + \Delta (\Delta - 1) + \Delta (\Delta - 1)^2 +
\cdots + \Delta (\Delta - 1)^{k - 1} \right]
$$
vertices.

Assume that $T^*$ has only one center vertex -- then $d (c, w) = k + 1$ and
$\varepsilon (v) = 2 r (T^*) - 1$\,.
If $d (c, u) = k - 1$\,, we can apply the transformation from Theorem \ref{thm-rot}
and strictly decrease $\xi^c$\,. Thus,
for $L > (\Delta - 1)^k$\,, the $k$-th level is also full and the pendent vertices in
the $(k + 1)$-th level can be
arbitrarily assigned. Using the same argument, for $L \leq (\Delta - 1)^k$\,, the extremal
trees are bicentral. By completing the $k$-th level,
we do not change the eccentric connectivity index -- since $\varepsilon (v) = \varepsilon (u)$\,.
Finally, $\xi (T^*) = \xi (VT (n, \Delta))$ and the result follows.
\end{proof}

In Table 1 we give the minimum value of eccentric connectivity index
among $n$ vertex trees with maximum vertex degree $\Delta$\,, together
with the number of such extremal trees (of which one is the Volkman tree).
Note that for $n \leq 2 \Delta$ the number of extremal trees is $1$\,, and for
$\Delta > 2$ holds $\xi^c (VT (n, \Delta - 1)) \geq \xi^c (VT (n, \Delta))$\,.

\vspace{5mm}

\section{A linear algorithm for calculating the eccentric connectivity index of a tree}

Let $T$ be a rooted tree, with a center vertex as root.
Let $c_1, c_2, \ldots, c_k$ be the
neighbors of the center vertex $c$\,, and $T_1, T_2, \ldots, T_k$ be the
corresponding rooted subtrees. Let $r_i$ be the length of the longest path
from $c_i$ in the subtree $T_i$\,, $i = 1, 2, \ldots, k$\,.

\begin{lemma}
\label{le-ecc}
The eccentricity of the vertex $v \in V (T_i)$ equals
$$
\varepsilon (v) = d (v, c) + 1 + \max_{i \neq k} r_k \ .
$$
\end{lemma}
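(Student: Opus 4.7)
My plan is to prove the formula through matched upper and lower bounds on $\varepsilon(v)$, interpreting the right-hand side as $d(v,c) + 1 + \max_{j \neq i} r_j$ (the double use of the index in the statement appears to be a typo). The crucial preliminary observation is that because $c$ is chosen as a center, the quantity $R := \max_j r_j$ must be attained by at least two distinct indices $j$. For if $r_1 > r_j$ for all $j \neq 1$, then moving the ``base'' to $c_1$ strictly decreases the eccentricity: the farthest vertex reachable inside $T_1$ from $c_1$ lies at distance $r_1$, while any vertex outside $T_1$ is at distance at most $1 + \max_{j \neq 1} r_j \leq r_1$, so $\varepsilon(c_1) \leq r_1 < 1 + r_1 = \varepsilon(c)$, contradicting the centrality of $c$. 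Consequently $\max_{j \neq i} r_j = R$ for every $i$, and in particular $r_i \leq R$ for all $i$.

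For the upper bound I would take an arbitrary vertex $u \in V(T)$ and split into three cases according to its location. If $u = c$, the bound $d(v,u) = d(v,c) \leq d(v,c) + 1 + R$ is trivial. If $u \in T_j$ with $j \neq i$, then the unique $v$–$u$ path in the tree must pass through $c$, so $d(v,u) = d(v,c) + 1 + d(c_j, u) \leq d(v,c) + 1 + r_j \leq d(v,c) + 1 + R$. If $u \in T_i$, then $d(v,u) \leq d(v,c) + d(c,u) \leq d(v,c) + 1 + r_i$, and by the centrality observation $r_i \leq R$, so again $d(v,u) \leq d(v,c) + 1 + R$.

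For the lower bound I would exhibit a witness: by the centrality observation there exists $j^* \neq i$ with $r_{j^*} = R$, and I take $u$ to be the endpoint of a longest path from $c_{j^*}$ in $T_{j^*}$. Since the $v$–$u$ path passes through $c$ and $c_{j^*}$, it has length exactly $d(v,c) + 1 + r_{j^*} = d(v,c) + 1 + R$, matching the upper bound and forcing equality.

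The only non-routine step is the centrality observation. Without it, one could have a tree where a single subtree $T_i$ is so deep that the farthest vertex from some $v \in T_i$ lies inside $T_i$ rather than across $c$, which would break the clean formula. Once that observation is in hand, the case analysis for the upper bound and the explicit witness for the lower bound are both straightforward tree-distance computations.
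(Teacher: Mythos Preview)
Your ``centrality observation'' contains an off-by-one error, and the claim it supports is false precisely when $T$ has a bicenter. In your bound for $\varepsilon(c_1)$ you write that any vertex outside $T_1$ lies at distance at most $1 + \max_{j\neq 1} r_j$ from $c_1$, but for $u \in T_j$ with $j \neq 1$ one actually has $d(c_1,u) = d(c_1,c) + d(c,c_j) + d(c_j,u) \leq 2 + r_j$. With the corrected bound you get only $\varepsilon(c_1) \leq \max\bigl(r_1,\,2 + \max_{j\neq 1} r_j\bigr) \leq r_1 + 1 = \varepsilon(c)$, which permits equality. Concretely, take $P_4 = v_0v_1v_2v_3$ with $c = v_1$: then $c_1 = v_0$, $c_2 = v_2$, $r_1 = 0$, $r_2 = 1$, so $R = 1$ is attained by a single index, and for $v = v_2 \in T_2$ your formula $d(v,c)+1+R$ gives $3$ while $\varepsilon(v_2) = 2$. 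The lemma's expression $d(v,c) + 1 + \max_{j\neq 2} r_j = 2$ is the correct one, so your reduction of the statement to ``$\varepsilon(v) = d(v,c)+1+R$'' is already wrong before any bound is proved.

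The paper avoids this by using only the weaker (and correct) consequence of centrality, namely $r_i \leq 1 + \max_{k\neq i} r_k$. This is not enough to salvage your triangle-inequality estimate $d(v,u) \leq d(v,c) + 1 + r_i$ for $u \in T_i$; instead one argues via the vertex $w$ on the $v$--$u$ path closest to $c$ (the LCA), obtaining $d(v,u) = d(v,w) + d(w,u) \leq (d(v,c)-1) + r_i \leq d(v,c) + \max_{k\neq i} r_k$, which is strictly below the asserted eccentricity. Your lower-bound witness is easily repaired once you drop the claim $r_{j^*} = R$: simply choose $j^*$ to realise $\max_{j\neq i} r_j$, and the path through $c$ to the deepest leaf of $T_{j^*}$ has the required length.
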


\vspace{5mm}

\begin{tabular} {||c|ccccccccc||} \hline
$n$ & $\Delta=2$ & $\Delta=3$ & $\Delta=4$ & $\Delta=5$ & $\Delta=6$ & $\Delta=7$ & $\Delta=8$
& $\Delta=9$ & $\Delta=10$ \\[2mm] \hline
11 & 150\,;\,1 & 79\,;\,3  & 62\,;\,5  & 60\,;\,6  & 49\,;\,1  & 49\,;\,1  & 49\,;\,1  & 49\,;\,1  & 30\,;\,1 \\[1mm]
12 & 182\,;\,1 & 88\,;\,3  & 69\,;\,4  & 67\,;\,8  & 54\,;\,1  & 54\,;\,1  & 54\,;\,1  & 54\,;\,1  & 54\,;\,1 \\[1mm]
13 & 216\,;\,1 & 97\,;\,1  & 76\,;\,4  & 74\,;\,9  & 72\,;\,10  &59\,;\,1  & 59\,;\,1  & 59\,;\,1  & 59\,;\,1 \\[1mm]
14 & 254\,;\,1 & 106\,;\,1  & 83\,;\,3  & 81\,;\,11  & 79\,;\,12  & 64\,;\,1  & 64\,;\,1  & 64\,;\,1  & 64\,;\,1 \\[1mm]
15 & 294\,;\,1 & 130\,;\,7  & 90\,;\,2  & 88\,;\,11  & 86\,;\,16  & 84\,;\,14  & 69\,;\,1  & 69\,;\,1  & 69\,;\,1 \\[1mm]
16 & 338\,;\,1 & 141\,;\,10  & 97\,;\,1  & 95\,;\,12  & 93\,;\,19  & 91\,;\,19  & 74\,;\,1  & 74\,;\,1  & 74\,;\,1 \\[1mm]
17 & 384\,;\,1 & 152\,;\,7  & 104\,;\,1  & 102\,;\,11  & 100\,;\,23  & 98\,;\,24 & 96\,;\,21 & 79\,;\,1 & 79\,;\,1 \\[1mm]
18 & 434\,;\,1 & 163\,;\,7 & 138\,;\,24  & 109\,;\,11  & 107\,;\,25 & 105\,;\,31 & 103\,;\,27 & 84\,;\,1 & 84\,;\,1 \\[1mm]
19 & 486\,;\,1 & 174\,;\,4 & 147\,;\,20  & 116\,;\,9 & 114\,;\,29 & 112\,;\,37 & 110\,;\,36 & 108\,;\,29 & 89\,;\,1 \\[1mm]
20 & 542\,;\,1 & 185\,;\,3 & 156\,;\,18  & 123\,;\,8 & 121\,;\,30 & 119\,;\,46 & 117\,;\,45 & 115\,;\,39 & 94\,;\,1 \\[1mm]
&&&&&&&&& \\ \hline \hline
$n$ & $\Delta=11$ & $\Delta=12$ & $\Delta=13$ & $\Delta=14$ & $\Delta=15$ & $\Delta=16$ &
$\Delta=17$ & $\Delta=18$ & $\Delta=19$ \\[2mm] \hline
11 &         &         &         &         &         &         &         &         &        \\[1mm]
12 & 33\,;\,1 &         &         &         &         &         &         &         &        \\[1mm]
13 & 59\,;\,1  & 36\,;\,1  &         &         &          &        &         &         &     \\[1mm]
14 & 64\,;\,1  & 64\,;\,1  & 39\,;\,1  &          &          &         &         &         & \\[1mm]
15 & 69\,;\,1  & 69\,;\,1  & 69\,;\,1  & 42\,;\,1  &          &         &         &         &        \\[1mm]
16 & 74\,;\,1  & 74\,;\,1  & 74\,;\,1  & 74\,;\,1  & 45\,;\,1  &          &         &         &        \\[1mm]
17 & 79\,;\,1  & 79\,;\,1  & 79\,;\,1  & 79\,;\,1  & 79\,;\,1  & 48\,;\,1 &         &        &        \\[1mm]
18 & 84\,;\,1  & 84\,;\,1  & 84\,;\,1  & 84\,;\,1  & 84\,;\,1 & 84\,;\,1  & 51\,;\,1 &        &        \\[1mm]
19 & 89\,;\,1  & 89\,;\,1  & 89\,;\,1  & 89\,;\,1  & 89\,;\,1 & 89\,;\,1  & 89\,;\,1  & 54\,;\,1 &        \\[1mm]
20 & 94\,;\,1  & 94\,;\,1  & 94\,;\,1  & 94\,;\,1  & 94\,;\,1 & 94\,;\,1  & 94\,;\,1  & 94\,;\,1  & 57\,;\,1 \\[1mm]
\hline \hline
\end{tabular}

\vspace{5mm}

\baselineskip=0.20in

\noindent
{\bf Table 1.} The minimal value of the eccentricity connectivity index of trees
with $n$ vertices and maximum vertex degree $\Delta$\,, and the number of such extremal trees.

\vspace{15mm}

\baselineskip=0.30in

\begin{proof}
We show that the longest path starting at vertex $v$ has to traverse the center vertex $c$\,.
This means that the eccentricity of $v$ is equal to the sum of $d (v, c)$ and the
longest path starting at $c$ and not contained in $T_i$\,. Assume that the longest path
$P$ from $v$ stays in the subtree $T_i$\,, and let $w$ be the
vertex from $P$ at the smallest distance from the root $c$\,. Then
$d (v, c) \geq d (v, w) + 1$\,. Since the root vertex is a center of $T$\,, we have
$\max\limits_{k \neq i} r_k + 1 \geq r_i$ and consequently
$$
d (v, c) + \max_{k \neq i} r_k \geq d (v, w) + r_i \geq |P| \ .
$$
This means that $d (v, c) + 1 + \max\limits_{k \neq i} r_k$ is strictly greater
than $|P|$\,, which is a contradiction.
\end{proof}

We now present a simple linear algorithm for calculating the eccentric connectivity index of a
tree~$T$\,. First, find a center vertex of a tree -- this can be done in time $O (n)$ (see
\cite{CoLRS01} for details). For every vertex $v$\,, we have to find the length of the longest path
from $v$ in the subtree rooted at $v$\,. This can be done inductively using depth--first search,
also in time $O (n)$\,. If $r [v]$ represents the length of the longest path in the subtree rooted
at $v$\,, then
$$
r [v] = 1 + \max_{(v, w) \in E (T), \ w \neq p [v]} r [w]
$$
where $p [v]$ denotes the parent of vertex $v$ in $T$\,.
For all neighbors $c_i$ of the center vertex $c$\,, we can calculate the maximum
$\max\limits_{i \neq j} r [c_j]$\,. Finally, for every vertex $v$ we calculate the
eccentricity $\varepsilon (v)$ in $O (1)$ using Lemma \ref{le-ecc},
and sum $deg (v) \cdot \varepsilon (v)$\,.

The time complexity of the algorithm is linear $O (n)$\,, and the memory used
is $O (n)$\,, since we need three additional arrays of length $n$\,.

\vspace{5mm}

\noindent
{\it Acknowledgement.\/} This work was supported by the
research grants 144015G and 144007 of the Serbian Ministry of Science
and Technological Development.

\vspace{5mm}

\end{document}